\newlength{\short}
\newlength{\shorter}
\definecolor{darkgreen}{rgb}{0,0.5,0}
\definecolor{bluegreen}{rgb}{0,0.2,0.8}
\definecolor{darkred}{rgb}{0.8,0,0}
\definecolor{newercolor}{rgb}{0.2,0,1}
\definecolor{darkyellow}{rgb}{0.7,0.7,0}
\definecolor{orange}{rgb}{0.9,0.4,0}
\newcommand{\mynote}[1]{\noindent{\color{bluegreen}\textup{\boldd{[#1]}}}}
\let\Gamma=\varGamma
\newcommand{\4}[1]{\overline{#1}}   
\newcommand{\too}{\longrightarrow}
\newcommand{\boldd}[1]{{\mathversion{bold}\textbf{#1}}}
\newcommand{\lie}[3]{\def\test{#2}\def\tst{G}\ifx\test\tst{{}^{#1}#2_{#3}}
\else{{}^{#1}\!#2_{#3}}\fi}
\let\oldcirc=\circ
\renewcommand{\circ}{\mathchoice
    {\mathbin{\scriptstyle\oldcirc}}{\mathbin{\scriptstyle\oldcirc}}
    {\mathbin{\scriptscriptstyle\oldcirc}}
    {\mathbin{\scriptscriptstyle\oldcirc}}}
\mathchardef\cdot="0201
\def\beq#1\eeq{\begin{equation*}#1\end{equation*}}
\def\beqq#1\eeqq{\begin{equation}#1\end{equation}}
\let\emptyset=\varnothing
\renewcommand{\:}{\colon}   
\newcommand{\longline}{\bigskip\centerline{\hbox to 5cm{\hrulefill}}\bigskip}
\newcommand{\mxfoura}[8]{\left(\begin{smallmatrix}#1&#2&#3&#4\\#5&#6&#7&#8\\}
\newcommand{\mxfourb}[8]{#1&#2&#3&#4\\#5&#6&#7&#8\end{smallmatrix}\right)}
\DeclareMathAlphabet\EuR{U}{eur}{m}{n}
\SetMathAlphabet\EuR{bold}{U}{eur}{b}{n}
\newcommand{\higherlim}[2]{\displaystyle\setbox1=\hbox{\rm lim}
	\setbox2=\hbox to \wd1{\leftarrowfill} \ht2=0pt \dp2=-1pt
	\setbox3=\hbox{$\scriptstyle{#1}$}
	\def\test{#1}\ifx\test\empty
	\mathop{\mathop{\vtop{\baselineskip=5pt\box1\box2}}}\nolimits^{#2}
	\else
	\ifdim\wd1<\wd3
	\mathop{\hphantom{^{#2}}\vtop{\baselineskip=5pt\box1\box2}^{#2}}_{#1}
	\else
	\mathop{\mathop{\vtop{\baselineskip=5pt\box1\box2}}_{#1}}%
	\nolimits^{#2}
	\fi\fi}
\newcommand{\higherlimm}[2]{\setbox1=\hbox{\rm lim}
	\setbox2=\hbox to \wd1{\leftarrowfill} \ht2=0pt \dp2=-1pt
	\mathop{\mathop{\vtop{\baselineskip=5pt\box1\box2}}}\limits_{#1}
	\nolimits^{#2}}
\newcounter{let} \setcounter{let}{0}
\loop\stepcounter{let}
\edef\csname cal\alph{let}\endcsname%
\loop\stepcounter{let}
\edef\csname scr\alph{let}\endcsname%
\newcommand{\tdef}[2][]{\expandafter\newcommand\csname#2\endcsname%
{#1\textup{#2}}}
\newcommand{\fdef}[1]{\expandafter\newcommand\csname#1\endcsname%
{\mathfrak{#1}}}
\newcommand{\bb}{\mathfrak{b}}
\newcommand{\bbdef}[1]{\expandafter\newcommand%
\csname#1\endcsname{\mathbb{#1}}}
\newcommand{\itdef}[1]{\expandafter\newcommand\csname#1\endcsname%
{\textit{#1}}}
\newcommand{\gen}[1]{\langle{#1}\rangle}
\newcommand{\syl}[2]{\textup{Syl}_{#1}(#2)}
\newcommand{\autf}[1][]{\Aut_{\calf_{#1}}}
\newcommand{\homf}[1][]{\Hom_{\calf_{#1}}}
\newcommand{\isof}[1][]{\Iso_{\calf_{#1}}}
\newcommand{\longleft}[1]{\;{\leftarrow%
\count255=0 \loop \mathrel{\mkern-6mu}%
    \relbar\advance\count255 by1\ifnum\count255<#1\repeat}\;}
\newcommand{\longright}[1]{\;{\count255=0 \loop \relbar\mathrel{\mkern-6mu}%
    \advance\count255 by1\ifnum\count255<#1\repeat\rightarrow}\;}
\newcommand{\RIGHT}[3]{\mathrel{\mathop{\kern0pt\longright#1}
	\limits^{#2}_{#3}}}
\newcommand{\LEFT}[3]{\mathrel{\mathop{\kern0pt\longleft#1}\limits^{#2}_{#3}}
}
\newcommand{\longleftright}[1]{\;{\leftarrow\mathrel{\mkern-6mu}%
    \count255=0\loop\relbar\mathrel{\mkern-6mu}%
    \advance\count255 by1\ifnum\count255<#1\repeat\rightarrow}\;} 
\newcommand{\onto}[1]{\;{\count255=0 \loop \relbar\joinrel
    \advance\count255 by1
    \ifnum\count255<#1 \repeat \twoheadrightarrow}\;}
\newcommand{\RLEFT}[3]{\mathrel{%
   \mathop{\vcenter{\baselineskip=0pt\hbox{$\kern0pt\longright#1$}%
   \hbox{$\kern0pt\longleft#1$}}}\limits^{#2}_{#3}}}
\numberwithin{table}{section}
\renewenvironment{enumerate}[1][]
{\begin{enumerat}[#1]\setlength{\itemsep}{6pt}}{\end{enumerat}}
\renewenvironment{itemize}
{\begin{itemiz}\setlength{\itemsep}{6pt}\setlength{\itemindent}{-15pt}}
{\end{itemiz}}
\newenvironment{enuma}{\begin{enumerate}[{\rm(a) }]}{\end{enumerate}}
\newtheorem{Thm}{Theorem}
\newtheorem{Prop}[Thm]{Proposition}
\newtheorem{Cor}[Thm]{Corollary}
\newtheorem{Lem}[Thm]{Lemma}
\newtheorem{Claim}[Thm]{Claim}
\newtheorem{Ass}[Thm]{Assumption}
\newtheorem{Conj}[Thm]{Conjecture}
\newtheorem{Not}[Thm]{Notation}
\newtheorem{Hyp}[Thm]{Hypotheses}
\newtheorem{DefNot}[Thm]{Definition-Notation}
\theoremstyle{definition}
\newtheorem{Defi}[Thm]{Definition} 
\newtheorem{Rmk}[Thm]{Remark}
\newtheorem{Ex}[Thm]{Example}
\theoremstyle{remark}
\newcommand{\AAA}[1]{\def\test{#1}\def\tst{2}\textit{\textbf{A}}%
\ifx\test\tst\else^{(1#1)}\fi}
\newcommand{\LL}[1]{\def\test{#1}\def\tst{2}\textit{\textbf{L}}%
\ifx\test\tst\else^{(1#1)}\fi}
\newcommand{\NN}[1]{\def\test{#1}\def\tst{2}\textit{\textbf{N}}%
\ifx\test\tst\else^{(1#1)}\fi}
\def\Qtrp[#1,#2,#3]{{\ll}{#1},#2,#3{\gg}}  
\def\trp[#1,#2,#3]{{[\![}#1,#2,#3{]\!]}}
\def\Trp[#1,#2,#3]{\left[\!\!\left[#1,#2,#3\right]\!\!\right]}
\def\Qpr[#1,#2]{[\![#1,#2]\!]}
\newcommand{\xxx}{\textbf{\textit{x}}}
\title[Benson-Solomon fusion systems: correction]{Construction of 2-local 
finite groups of a type studied by Solomon and Benson: Correction}
\author{Bob Oliver}
\address{Universit\'e Sorbonne Paris Nord, LAGA, UMR 7539 du CNRS, 
99, Av. J.-B. Cl\'ement, 93430 Villetaneuse, France.}
\email{bobol@math.univ-paris13.fr}
\thanks{B. Oliver is partially supported by UMR 7539 of the CNRS}
\subjclass[2000]{Primary 55R35. Secondary 55R37, 20D06, 20D20}
\keywords{classifying spaces, $p$-completion, finite groups, fusion}
\begin{document}

\begin{abstract} 
We correct an error in Lemma 3.1 of my paper \cite{LO} coauthored with Ran 
Levi, and show that the change does not affect any of the other results in 
that paper. More precisely, as pointed out to us by Justin Lynd, there are 
two conjugacy classes of elementary abelian subgroups of rank $3$ in each 
of the Benson-Solomon fusion systems $\calf_\Sol(q)$, and not only one as 
claimed in \cite{LO}.
\end{abstract}

\maketitle

In the paper \cite{LO} by Ran Levi and this author, we proved, for an odd 
prime power $q$, that a certain fusion system $\calf_\Sol(q)$ containing 
the $2$-fusion system of $\Spin_7(q)$ is saturated, thus constructing the 
first examples of exotic fusion systems over finite $2$-groups. We then 
showed that the classifying spaces of these fusion systems are the 
``homotopy fixed point sets'' of certain self maps of the space $\BDI(4)$ 
constructed by Dwyer and Wilkerson \cite{DW}. All of these results had been 
predicted by Dave Benson \cite{Benson}, and so our paper mostly involved a 
confirmation of his predictions. Shortly after publishing \cite{LO}, Andy 
Chermak pointed out that there was an error in our construction of 
$\calf_\Sol(q)$, and this led to some major changes needed to fix it, 
presented in \cite{LO-corr}. 

Recently, Justin Lynd pointed out another error, one which he discovered 
during work with Ellen Henke and Assaf Libman \cite{HLL}, and which 
(fortunately) does not affect the rest of the paper. We had claimed in 
\cite[Lemma 3.1]{LO} that all elementary abelian subgroups of rank $3$ in 
$\calf_\Sol(q)$ are conjugate to each other, while Lynd and his 
collaborators \cite[Lemma 4.8]{HLL} showed (correctly) that there are in 
fact two classes of such subgroups. Thus their proof of the corrected 
statement will soon appear in print. We have decided to publish this 
correction anyhow: partly to have an acknowledgement of the error more 
closely linked to our original paper (as a warning to anyone else using our 
paper as reference), partly to give another proof closer to the arguments 
and terminology used in \cite{LO}, and especially to show why this 
change does not affect any of the other results in our paper. 

We first recall some of the notation and terminology used in \cite{LO} to 
describe elements and subgroups of $\Spin_7(q)$ when $q$ is an odd prime 
power. We first look at $\Spin_7(q)$ and some of its subgroups. As usual, 
the discriminant of an $\F_q$-vector space with nondegenerate quadratic 
form is the determinant of the form with respect to some basis (well 
defined modulo squares in $\F_q^\times$). 

\begin{DefNot}[{\cite[Definition A.7]{LO}}] \label{d:types}
Fix an odd prime power $q$, a $7$-dimensional $\F_q$-vector space $V$, and 
a nonsingular quadratic form $\bb$ on $V$ with square discriminant.  
Identify $\SO_7(q)=\SO(V,\bb)$ and $\Spin_7(q)=\Spin(V,\bb)$. An elementary 
abelian 2-subgroup of $\SO_7(q)$ or of $\Spin_7(q)$ will be called of 
\emph{type I} if the eigenspaces for its action on $V$ all have square 
discriminant (with respect to $\bb$), and of \emph{type II} otherwise. 

Fix $S(q)\in\syl2{\Spin_7(q)}$, and let $\cale_n$ be the set of all 
elementary abelian $2$-subgroups of $S(q)$ of rank $n$ such that $z\in E\le 
S(q)$. Let $\cale_n\I$ and $\cale_n\II$ be the subsets of $\cale_n$ 
consisting of those subgroups of types I and II, respectively. \\
\end{DefNot}

The conjugacy classes of elementary abelian 2-subgroups of $\Spin_7(q)$ 
containing its center, and their automizers, are well known. The following 
is one way to describe them.

\begin{Prop}[{\cite[Proposition A.8]{LO}}] \label{p:elem.abel.} 
Fix an odd prime power $q$. Then 
\begin{itemize} 

\item $\cale_2\II=\emptyset$;

\item each of the sets $\cale_2\I$, $\cale_3\I$, $\cale_3\II$, and 
$\cale_4\II$ consists of exactly one conjugacy class of subgroups; 

\item the set $\cale_4\I$ contains exactly two $\Spin_7(q)$-conjugacy 
classes, denoted here $\cale_4\Ia$ and $\cale_4\Ib$, and they are exchanged 
by a diagonal outer automorphism of the group; and 

\item $\cale_n=\emptyset$ for $n\ge5$. 

\end{itemize}
Furthermore, the following hold.
\begin{enuma}  

\item For all $E\in\cale_2\cup\cale_3\cup\cale_4\I$, $\Aut_{\Spin_7(q)}(E) 
=\{\alpha\in\Aut(E)\,|\,\alpha(z)=z\}$. 

\item For all $E\in\cale_4$, $C_{\Spin_7(q)}(E)=E$.  

\item If $E\in\cale_3$, then $C_{\Spin_7(q)}(E)=A\gen{x}$, where $A$ 
is abelian, $x^2=1$, and $\9xa=a^{-1}$ for all $a\in A$. If 
$E\in\cale_3\II$, then the Sylow 2-subgroups of $C_{\Spin_7(q)}(E)$ are 
elementary abelian of rank $4$. 

\end{enuma}
\end{Prop}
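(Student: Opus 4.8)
The plan is to reduce everything to the orthogonal group via the central extension $1\to\langle z\rangle\to\Spin_7(q)\to SO_7(q)$, whose image is the kernel $\Omega_7(q)$ of the spinor norm. Given $E\in\cale_n$, set $\bar E=E/\langle z\rangle$, a rank-$(n-1)$ elementary abelian subgroup of $SO_7(q)$; since $z$ acts trivially on $V$, the eigenspaces of $E$ on $V$ coincide with the common eigenspaces of $\bar E$, giving an orthogonal decomposition $V=\bigoplus_{\chi\in\widehat{\bar E}}V_\chi$ with $d_\chi=\dim V_\chi$ and $\sum d_\chi=7$. Writing $W_g=\bigoplus_{\chi(g)=-1}V_\chi$ for the $(-1)$-eigenspace of $g\in\bar E$, two such subgroups are $SO_7(q)$-conjugate exactly when the multisets of isometry types of the $V_\chi$ agree up to $\Aut(\bar E)$, using that over $\F_q$ a nondegenerate quadratic space is determined by its dimension and discriminant. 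First I would record the two lifting conditions coming from the even Clifford algebra: the preimage of $\bar E$ is elementary abelian (equal to $E$) iff (1) $\mathrm{disc}(W_g)$ is a square for every $g$ — i.e. $\bar E\le\Omega_7(q)$, since the spinor norm of $g$ is $\mathrm{disc}(W_g)$ — and (2) the lift of each $g$ squares to $1$; a direct Clifford-algebra computation gives (lift of $g$)${}^2=(-1)^{\frac12\dim W_g}$, so (2) reads $\dim W_g\equiv0\pmod4$. I would note that (2) already forces commutativity of the lifts, via $\dim W_{gh}=\dim W_g+\dim W_h-2\dim(W_g\cap W_h)$, which shows $g\mapsto\tfrac12\dim W_g\bmod 2$ is a quadratic form polarizing to the commutator pairing.

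Because $\dim W_g$ is even and at most $7$, condition (2) forces $\dim W_g\in\{0,4\}$, hence $\dim W_g=4$ for every $g\neq1$. The rank bound and the eigenspace pattern in each rank then both drop out of one counting identity: summing $\dim W_g$ over $g\in\bar E$ in two ways gives $(7-d_0)\,2^{n-2}=4(2^{n-1}-1)$, where $d_0=\dim V^{\bar E}$. This yields $7-d_0=8-2^{4-n}$, an integer only for $n\le4$, proving $\cale_n=\emptyset$ for $n\ge5$, and pinning down $d_0=3,1,0$ for $n=2,3,4$. So I would go rank by rank: for $n=2$ one gets $\dim W_g=4$ with square discriminant, forcing type I and a single class ($\cale_2\II=\emptyset$); for $n=3$ the pattern is $(d_0;d_{\chi_1},d_{\chi_2},d_{\chi_3})=(1;2,2,2)$ with the three planes of a common discriminant $\eta$, giving one type I class ($\eta$ square) and one type II class ($\eta$ nonsquare); for $n=4$ the decomposition is into seven lines indexed by $\widehat{\bar E}\setminus\{0\}\cong\F_2^3\setminus\{0\}$, and condition (1) forces the discriminant pattern to be linear, $\delta(\chi)=\langle a,\chi\rangle$ for some $a\in\F_2^3$ (this is where the Fano-plane incidence matrix having $\F_2$-rank $4$ enters). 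Up to $\Aut(\bar E)=\GL_3(\F_2)$ there are then exactly two patterns, $a=0$ (type I) and $a\neq0$ (type II).

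The main obstacle — and the point mishandled in \cite{LO} — is the passage from $SO_7(q)$-conjugacy to $\Spin_7(q)$-conjugacy, i.e. to $\Omega_7(q)$-conjugacy of $\bar E$. An $SO_7(q)$-class splits into two $\Omega_7(q)$-classes precisely when $N_{SO_7(q)}(\bar E)\le\Omega_7(q)$, which I would decide by computing spinor norms of normalizing isometries. For $n\le3$, and for the type II rank-$4$ subgroups, the centralizer $C_{SO_7(q)}(\bar E)=\prod_\chi O(V_\chi)$ already contains an element of nonsquare spinor norm (a reflection inside some $V_\chi$ of nonsquare discriminant, or a product of two sign changes on lines whose discriminants multiply to a nonsquare), so no splitting occurs and each of $\cale_2\I,\cale_3\I,\cale_3\II,\cale_4\II$ is a single class. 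For the type I rank-$4$ subgroup all seven lines are isometric with square discriminant, so every sign-change part of $C_{SO_7(q)}(\bar E)$ lies in $\Omega_7(q)$, and the quotient $N/C\cong\GL_3(\F_2)$ is realized by signed permutations of the seven lines. The decisive observation is that $\GL_3(\F_2)$, being simple, acts on the seven lines by even permutations ($\GL_3(\F_2)\le A_7$), so each such element is realizable in $SO_7(q)$ with square spinor norm; hence $N_{SO_7(q)}(\bar E)\le\Omega_7(q)$ and the class splits into exactly two, $\cale_4\Ia$ and $\cale_4\Ib$, interchanged by conjugation by any element of $SO_7(q)\setminus\Omega_7(q)$, i.e. by a diagonal outer automorphism.

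Finally, for the centralizers and automizers I would work inside $\prod_\chi O(V_\chi)$ and lift. For $E\in\cale_4$ all eigenspaces are lines, so $C_{SO_7(q)}(\bar E)$ is the group of even sign patterns; intersecting with the commutator condition $\dim(W_y\cap W_g)\equiv0\pmod2$ for all $g$ (which for a sign pattern on a set $T$ of lines translates to $\sum_{i\in T}\chi_i=0$ in $\widehat{\bar E}$) and the determinant condition cuts this back to $\bar E$ itself, giving $C_{\Spin_7(q)}(E)=E$ and proving (b). For $E\in\cale_3$, $C_{SO_7(q)}(\bar E)=O(V_0)\times\prod_{i=1}^3O(V_i)$ with each $O(V_i)$ a dihedral $O_2^{\pm}(q)$ whose rotation subgroup is cyclic; the product of the rotation subgroups gives the abelian $A$ and a simultaneous reflection gives the inverting involution $x$, proving (c), and in the type II case a $2$-group computation in the $O_2^-(q)$ factors shows the Sylow $2$-subgroups are elementary abelian of rank $4$. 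For (a), since $z$ is central in $\Spin_7(q)$ every induced automorphism fixes $z$, giving ``$\subseteq$''; for the reverse I would realize the full stabilizer of $z$ in $\Aut(E)$ by combining the $\Aut(\bar E)$-action (all of $\GL(\bar E)$, since in these cases the relevant eigenspaces are mutually isometric and freely permutable — which is exactly why $\cale_4\II$ is excluded from (a)) with the diagonal automorphisms $t\mapsto tz^{f(\bar t)}$ realized by conjugation by suitable elements of $C_{SO_7(q)}(\bar E)$. The bulk of the remaining work is the spinor-norm bookkeeping underlying the conjugacy count, which I regard as the crux.
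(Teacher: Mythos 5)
The paper gives no proof of this proposition at all --- it is quoted from \cite[Proposition A.8]{LO}, and the ``proof'' here is only a pointer to the table and numbered parts of that statement. So the comparison is really with the appendix of \cite{LO}, and your reconstruction follows the same general architecture one finds there: pass to eigenspace decompositions in $\SO_7(q)$, characterize which elementary abelian subgroups of $\Omega_7(q)$ have elementary abelian preimage by the two Clifford conditions (square discriminant of $W_g$, i.e.\ trivial spinor norm, and $\dim W_g\equiv0\pmod4$), extract the rank bound and the eigenspace dimensions from the counting identity $(7-d_0)2^{n-2}=4(2^{n-1}-1)$, and settle splitting of $\SO_7(q)$-classes into $\Omega_7(q)$-classes by computing spinor norms of normalizing isometries. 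In particular the observation that $\GL_3(\F_2)\le A_7$ forces $N_{\SO_7(q)}(\bar E)\le\Omega_7(q)$ exactly in the type I rank $4$ case is the right mechanism for the $\cale_4\Ia$/$\cale_4\Ib$ dichotomy and for why the other classes do not split.

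One step is misstated and needs repair. In the rank $4$ case the constraints coming from your condition (1) are indexed by the sets $\{\chi\mid\chi(g)=-1\}$, which are the \emph{complements} of the lines of the Fano plane; over $\F_2$ these span only the $3$-dimensional $[7,3,4]$ simplex code (the sum of the complements of two lines is the complement of the third line through their common point), not a rank $4$ system. Its orthogonal complement is the $[7,4]$ Hamming code, which contains, besides the linear patterns $\delta=\langle a,\cdot\rangle$, also the indicator functions of lines and the all-ones pattern. So condition (1) alone does \emph{not} force $\delta$ to be linear. What eliminates the extra solutions is the standing hypothesis that $\bb$ has square discriminant, which imposes the further relation $\sum_\chi\delta(\chi)=0$ and cuts the solution space down to the three-dimensional space of linear patterns; with that relation added, your conclusion (two $\GL_3(\F_2)$-orbits, $a=0$ and $a\ne0$) is correct and matches Proposition~\ref{p:E4II}(c). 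The remaining elisions --- that the preimage in $\Spin_7(q)$ of the product of rotation subgroups in part (c) is actually abelian, and the sign bookkeeping showing $\9xa=a^{-1}$ on all of $A$ rather than $a^{-1}z$ --- are genuine computations, but they follow from the commutator formula $[\tilde g,\tilde h]=z^{\dim(W_g\cap W_h)}$ that you already invoke, so I regard them as fillable.
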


\begin{proof} See \cite[Proposition A.8]{LO}: point (a) and the information 
about conjugacy classes are stated in the table, and points (b,c) are 
points (a,d) in that proposition. 
\end{proof}

It remains to describe $\Aut_{\Spin_7(q)}(E)$ for $E\in\cale_4\II$. This is 
done in the next proposition. Let $\4\F_q$ be the algebraic closure of 
$\F_q$, and let $\psi^q\in\Aut(\Spin_7(\4\F_q))$ be the Frobenius 
automorphism. In particular, $C_{\Spin_7(\4\F_q)}(\psi^q)=\Spin_7(q)$. 

\begin{Prop}[{\cite[Proposition A.9]{LO}}] \label{p:E4II}
Fix an odd prime power $q$, and let $z\in{}Z(\Spin_7(q))$ be the central 
involution. 
\begin{enuma}  

\item For each $U\in\cale_4$, there is an element $g\in\Spin_7(\4\F_q)$ 
such that $\9gU\in\cale_4\Ia$. Set 
	\[ \xxx(U) = g^{-1}\psi^q(g) \]
for such $g$; then $\xxx(U)\in U$ and is independent of the choice of $g$. 

\item We have $U\in\cale_4\Ia$ if and only if $\xxx(U)=1$, and 
$U\in\cale_4\Ib$ if and only if $\xxx(U)=z$. 
 
\item Assume $U\in\cale_4\II$, and set $X=\gen{z,\xxx(U)}$.  
Then $\rk(X)=2$, and
        \[ \Aut_{\Spin_7(q)}(U) = \bigl\{\alpha\in\Aut(U) \,\big|\, 
        \alpha|_X=\Id \bigr\}. \]
Of the seven eigenspaces for the action of $U$ on $V$, the four on which 
$\xxx(U)$ acts via $-\Id$ all have nonsquare discriminant, while the three 
on which $\xxx(U)$ acts as the identity all have square discriminant.

\end{enuma}
\end{Prop}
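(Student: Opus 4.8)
The plan is to run a Galois descent argument with respect to the Frobenius $\psi^q$, exploiting that over the algebraically closed field $\4\F_q$ all the invariants separating $\cale_4\Ia$, $\cale_4\Ib$ and $\cale_4\II$ disappear: every scalar is a square, so every eigenspace has ``square discriminant'', and the diagonal outer automorphism exchanging $\cale_4\Ia$ and $\cale_4\Ib$ becomes inner. Hence all of $\cale_4$ is a single $\Spin_7(\4\F_q)$-conjugacy class, and each $U\in\cale_4$ is conjugate to a member of the nonempty class $\cale_4\Ia$; this produces the element $g$ of part (a). Writing $U_0=gUg^{-1}\in\cale_4\Ia\le\Spin_7(q)$ and using that $\psi^q$ fixes both $U$ and $U_0$ elementwise, a one-line computation shows that for each $u\in U$ the element $x:=g^{-1}\psi^q(g)$ satisfies $xux^{-1}=u$, so $x\in C_{\Spin_7(\4\F_q)}(U)$. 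I would then invoke the $\4\F_q$-analogue of Proposition~\ref{p:elem.abel.}(b), namely $C_{\Spin_7(\4\F_q)}(U)=U$ (the seven eigenlines of $U$ on $\4V=V\otimes\4\F_q$ are distinct, so the centralizer lies in the preimage of the diagonal torus, an extraspecial $2$-group in which the image of $U$ is a Lagrangian), to conclude $\xxx(U)=x\in U$.

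The independence of $\xxx(U)$ from $g$ is the first delicate point, and it rests on the fact that the target class $\cale_4\Ia$ has a normalizer defined over $\F_q$. By Proposition~\ref{p:elem.abel.}(a), $\Aut_{\Spin_7(q)}(U_0)=\{\alpha\mid\alpha(z)=z\}$ for $U_0\in\cale_4\Ia$; since over $\4\F_q$ the automizer can only be this same $z$-stabilizer, while $C_{\Spin_7(\4\F_q)}(U_0)=U_0=C_{\Spin_7(q)}(U_0)$, one gets $N_{\Spin_7(\4\F_q)}(U_0)=N_{\Spin_7(q)}(U_0)$, i.e. every element of $\Spin_7(\4\F_q)$ normalizing $U_0$ is rational. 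Given two conjugators $g_1,g_2$ with $U_i=g_iUg_i^{-1}\in\cale_4\Ia$, I would choose $s\in\Spin_7(q)$ with $sU_1s^{-1}=U_2$ (possible as $\cale_4\Ia$ is a single class) and set $n=g_2^{-1}sg_1\in N_{\Spin_7(\4\F_q)}(U)$; then $g_1ng_1^{-1}\in N_{\Spin_7(\4\F_q)}(U_0)$ is rational, which forces $\psi^q(n)=x_1^{-1}n\,x_1$ and hence $x_2=n\,x_1\,\psi^q(n)^{-1}=x_1$ after a short cancellation, where $x_i=g_i^{-1}\psi^q(g_i)$.

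The engine for (b) and (c) is a discriminant computation on eigenspaces. For a character $\chi$ of $U$ trivial on $z$, the eigenline $\4V_\chi\subseteq\4V$ is $\psi^q$-stable; if $w$ spans the corresponding (square-discriminant) rational eigenline of $U_0$, then $v=g^{-1}w$ spans $\4V_\chi$ and satisfies $\psi^q(v)=\chi(\xxx(U))\,v$, because $\psi^q(g)^{-1}g=\xxx(U)$ acts on $\4V_\chi$ by the scalar $\chi(\xxx(U))\in\{\pm1\}$. When $\chi(\xxx(U))=1$ the vector $v$ is rational and $g^{-1}$ restricts to an $\F_q$-isometry, so $V_\chi$ has square discriminant; when $\chi(\xxx(U))=-1$ the rational points of $\4V_\chi$ are spanned by $c_0v$ with $c_0^q=-c_0$, and since $c_0\notin\F_q$ while $c_0^2\in\F_q$ the scalar $c_0^2$ is a nonsquare, so $V_\chi$ has nonsquare discriminant. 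As exactly four of the seven nontrivial characters are nontrivial on a given nonzero element of $U/\gen{z}$ and three are trivial, this gives the final sentence of (c) and the equivalence ``$U$ of type I $\iff \xxx(U)\in\gen{z}$''.

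Everything then assembles. For (b): taking $g=1$ gives $\xxx(U)=1$ when $U\in\cale_4\Ia$, and conversely $\xxx(U)=1$ forces $g\in\Spin_7(q)$, whence $U\in\cale_4\Ia$; combining the type-I criterion with the disjointness of $\cale_4\Ia,\cale_4\Ib$ yields $\xxx(U)=z\iff U\in\cale_4\Ib$. For (c), $U\in\cale_4\II$ is not of type I, so $\xxx(U)\notin\gen{z}$ and $X=\gen{z,\xxx(U)}$ has rank $2$. The inclusion $\Aut_{\Spin_7(q)}(U)\subseteq\{\alpha\mid\alpha|_X=\Id\}$ holds because any rational $n\in N_{\Spin_7(q)}(U)$ makes $gn^{-1}$ another valid conjugator, so independence gives $n\,\xxx(U)\,n^{-1}=\xxx(U)$, and with $nzn^{-1}=z$ this says $c_n|_X=\Id$. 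For equality I would attach to each $\alpha\in\Aut_{\Spin_7(\4\F_q)}(U)$, with lift $m\in N_{\Spin_7(\4\F_q)}(U)$, the element $\delta(\alpha)=m^{-1}\psi^q(m)\in U$ (well defined since $C_{\Spin_7(\4\F_q)}(U)=U$); then $\Aut_{\Spin_7(q)}(U)=\{\alpha\mid\delta(\alpha)=1\}$, and transporting $m$ into the rational normalizer of $U_0$ yields the clean formula $\delta(\alpha)=\alpha^{-1}(\xxx(U))\cdot\xxx(U)$. Since $\Aut_{\Spin_7(\4\F_q)}(U)=\{\alpha\mid\alpha(z)=z\}$ (transport the $z$-stabilizer automizer of $U_0$ through conjugation by $g$), the kernel of $\delta$ is exactly $\{\alpha\mid\alpha(z)=z,\ \alpha(\xxx(U))=\xxx(U)\}=\{\alpha\mid\alpha|_X=\Id\}$, as required. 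I expect the main obstacle to be the spin double-cover bookkeeping: the eigenspace argument only determines the image of $\xxx(U)$ in $\SO_7$, so the genuine work is the rationality input $N_{\Spin_7(\4\F_q)}(U_0)=N_{\Spin_7(q)}(U_0)$ for $U_0\in\cale_4\Ia$, which both pins down the $z$-component of $\xxx(U)$ and powers the descent formula for $\delta$.
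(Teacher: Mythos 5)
Your argument is correct. Note that this correction note does not itself prove Proposition \ref{p:E4II}: the statement is quoted from \cite[Proposition A.9]{LO} without proof, and the Lang--Steinberg descent you run --- the coboundary $g^{-1}\psi^q(g)$ landing in $C_{\Spin_7(\4\F_q)}(U)=U$, the rationality of the full normalizer of a subgroup in $\cale_4\Ia$ (forced by $|N|=|C|\cdot|\Aut|$ being the same over $\F_q$ and $\4\F_q$), and the eigenline computation $\psi^q(v)=\chi(\xxx(U))\,v$ converting eigenvalues of $\xxx(U)$ into square versus nonsquare discriminants --- is exactly the mechanism used in that source to define and analyze $x_\calc(U)$. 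So this is essentially the cited proof, carried out correctly, with the one mildly delicate point (independence of $\xxx(U)$ from the choice of $g$, and the identity $\delta(\alpha)=\alpha^{-1}(\xxx(U))\cdot\xxx(U)$ in part (c)) handled properly via that rational-normalizer fact.
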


What we call $\xxx(E)$ here is denoted $x_\calc(E)$ in \cite{LO}. 

Note that the type of $E$ is defined for all elementary abelian subgroups: 
if $z\notin E$ then the type of $E$ is the same as that of $E\gen{z}$. 
However, the statement that there are exactly two $\Spin_7(q)$-classes of 
subgroups isomorphic to $E_8$, determined by type, applies only to those 
subgroups that contain $z$. (Clearly, a subgroup that doesn't contain $z$ 
cannot be $\Spin_7(q)$-conjugate to one that does.)

This was the source of the error in the proof of \cite[Lemma 3.1]{LO}. We 
worked with the ``two conjugacy classes of rank 3 subgroups'', but forgot 
that this applies only to those subgroups that contain $z$. In particular, 
since an $\calf_\Sol(q)$-isomorphism from subgroup in $\cale_4\I$ to one in 
$\cale_4\II$ cannot send $z$ to itself, it sends subgroups of rank $3$ 
containing $z$ to ones not containing $z$, and so the argument involving 
types I and II does not apply. 

As stated above, we let $\cale_4\Ia$ and $\cale_4\Ib$ denote the two 
$\Spin_7(q)$-conjugacy classes in $\cale_4\I$. More precisely, in terms of 
the notation used in \cite{LO}, $\cale_4\Ia$ denotes the class of the 
subgroup $E_*=\gen{z,z_1,\5A,\5B}$. But these details aren't needed in what 
follows: what is important is that it is the class $\cale_4\Ib$ that fuses 
with $\cale_4\II$ in the larger fusion system $\calf_\Sol(q)$ (as stated 
below in Lemma \ref{l:3.1}(c)).

By Proposition \ref{p:elem.abel.}(c), each subgroup $E\cong E_8$ in $S$ is 
contained in some $U\cong E_{16}$. (If $z\notin E$, then $E\gen{z}\cong 
E_{16}$.)

\begin{Lem} \label{l:I-II}
For $U\in\cale_4\Ib\cup\cale_4\II$ and $E\le U$ of rank $3$ containing $z$,
	\beqq
	\textup{$E$ has type I if $\xxx(U)\in E$ and has type II if 
	$\xxx(U)\notin E$.} \label{eq}
	\eeqq
\end{Lem}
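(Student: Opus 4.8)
The plan is to reduce everything to the decomposition of $V$ into eigenspaces under the image of $U$ in $\SO_7(q)=\SO(V,\bb)$, and then to read off discriminants from the eigenvalue data in Proposition~\ref{p:E4II}(c). Write $\pi\colon\Spin_7(q)\to\SO_7(q)$ for the natural surjection with kernel $\gen z$, and set $\overline U=\pi(U)$, $\overline E=\pi(E)$, and $\bar x=\pi(\xxx(U))$; these are elementary abelian of ranks $3$, $2$, and $\le1$. Since $C_{\Spin_7(q)}(U)=U$ by Proposition~\ref{p:elem.abel.}(b), the group $\overline U$ acts on $V$ with seven pairwise $\bb$-orthogonal one-dimensional eigenspaces (distinct eigenspaces are orthogonal because $\overline U$ acts isometrically; these are the seven lines of Proposition~\ref{p:E4II}(c) when $U\in\cale_4\II$), indexed by seven of the eight characters $\chi\in\Hom(\overline U,\{\pm1\})$. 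Let $\chi_0$ be the unique character that does not occur. Because $\xxx(U)$ acts on $V_\chi$ as the scalar $\chi(\bar x)$, Proposition~\ref{p:E4II}(c) becomes the clean relation: for each occurring $\chi$, the line $V_\chi$ has square discriminant if and only if $\chi(\bar x)=1$. For $U\in\cale_4\Ib$ we have $\xxx(U)=z$, so $\bar x=1$ and every occurring line has square discriminant, so the same relation holds trivially.

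The first real step is to pin down the missing character $\chi_0$. When $U\in\cale_4\II$ we have $\bar x\ne1$, so evaluation at $\bar x$ is a nontrivial homomorphism $\Hom(\overline U,\{\pm1\})\to\{\pm1\}$ and exactly four characters satisfy $\chi(\bar x)=1$. Proposition~\ref{p:E4II}(c) says precisely three of the occurring characters have $\chi(\bar x)=1$ (the square-discriminant lines) and four have $\chi(\bar x)=-1$; hence all four characters with value $-1$ occur, and the unique non-occurring character $\chi_0$ must satisfy $\chi_0(\bar x)=1$. This equality $\chi_0(\bar x)=1$ is the crux of the argument, and the only place the exact $3$-versus-$4$ count is used; it holds trivially when $\bar x=1$.

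Next I would restrict to $\overline E$. Let $\eta$ be the nontrivial character of $\overline U$ that is trivial on $\overline E$; since $\ker\eta$ has rank $2$ and contains $\overline E$, we get $\ker\eta=\overline E$. Restriction to $\overline E$ groups the eight characters of $\overline U$ into the four cosets of $\gen\eta$, and the four eigenspaces of $\overline E$ on $V$ are the corresponding orthogonal sums. Three of these cosets consist of two occurring characters $\{\chi,\chi\eta\}$ and give a two-dimensional eigenspace of discriminant $\chi(\bar x)\cdot(\chi\eta)(\bar x)=\eta(\bar x)$; the remaining coset is $\{\chi_0,\chi_0\eta\}$, whose occurring member $\chi_0\eta$ gives a one-dimensional eigenspace of discriminant $(\chi_0\eta)(\bar x)=\chi_0(\bar x)\,\eta(\bar x)=\eta(\bar x)$, using $\chi_0(\bar x)=1$. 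Thus all four eigenspaces of $\overline E$ have discriminant $\eta(\bar x)$, so $E$ has type I if and only if $\eta(\bar x)=1$, i.e.\ $\bar x\in\ker\eta=\overline E$, i.e.\ $\xxx(U)\in E$ (as $E=\pi^{-1}(\overline E)\cap U$ contains $z$); and type II otherwise. This is exactly \eqref{eq}.

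The main obstacle is the treatment of the exceptional one-dimensional $\overline E$-eigenspace: a priori its discriminant could differ from that of the three two-dimensional ones, which would break the dichotomy. Establishing $\chi_0(\bar x)=1$ from the precise eigenvalue count resolves this, and it is the only step that uses more than the orthogonal eigenspace decomposition; everything else is bookkeeping of characters of $(\Z/2)^3$ restricted to a rank-$2$ subgroup.
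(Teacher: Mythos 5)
Your proof is correct and follows essentially the same route as the paper's: decompose $V$ into the seven eigenlines of $U$, use Proposition~\ref{p:E4II}(c) to equate ``$V_\chi$ has square discriminant'' with ``$\xxx(U)$ acts trivially on $V_\chi$'', and read off the discriminants of the eigenspaces of $E$ as products of those of the lines they contain. Your character bookkeeping is in fact slightly more complete than the paper's, since via the observation $\chi_0(\bar x)=1$ you explicitly check that the one-dimensional eigenspace of $E$ carries the same discriminant class as the three two-dimensional ones, a point the published argument passes over silently.
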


\begin{proof} If $U\in\cale_4\Ib$, then $\xxx(U)=z$ by Proposition 
\ref{p:E4II}(b), and each subgroup of rank $3$ containing $z$ has type I by 
definition. So \eqref{eq} holds in this case, and we assume from now on 
that $U\in\cale_4\II$.

Let $X_1,X_2,X_3,Y_1,Y_2,Y_3,Y_4$ be the seven eigenspaces of 
$U$ (corresponding to the seven nontrivial characters 
$U/\gen{z}\too\{\pm1\}$), labeled so that $\xxx(U)$ is the identity on the 
$X_i$ and acts by $-\Id$ on the $Y_i$. By Proposition \ref{p:E4II}(c), each 
$X_i$ has square discriminant, while each $Y_i$ has nonsquare discriminant. 
If $\xxx(U)\in E$, then each of the three 2-dimensional eigenspaces for $E$ 
(corresponding to the nontrivial characters $E/\gen{z}\too\{\pm1\}$) is a 
sum of two of the $X_i$ or two of the $Y_i$, and hence has square 
discriminant. So $E$ has type I in this case. If $\xxx(U)\notin E$, then 
each of the three 2-dimensional eigenspaces for $E$ is a sum $X_i\oplus 
Y_j$ for some $i,j$, hence has nonsquare discriminant, so $E$ has type II. 
\end{proof}

Thus either all rank 3 subgroups are $\calf_\Sol(q)$-conjugate, or there are 
exactly two classes, of which one contains all subgroups of 
$U\in\cale_4\Ib\cup\cale_4\II$ that contain $\xxx(U)$ as well as other 
subgroups containing $z$ of type I, and the other contains the subgroups of 
$U\in\cale_4\Ib\cup\cale_4\II$ that don't contain $\xxx(U)$. 

\begin{Lem}[Corrected version of {\cite[Lemma 3.1]{LO}}] \label{l:3.1}
Set $\calf=\calf_\Sol(q)$. 
\begin{enuma} 

\item For each $r\le2$, there is a unique $\calf$-conjugacy class of 
elementary abelian subgroups $E \le S(q)$ of rank $r$. 

\item There are two $\calf$-conjugacy classes of rank $3$ elementary 
abelian subtroups $E\le S(q)$: one of the classes contains $\cale_3\I$ and 
the other contains $\cale_3\II$. 

\item There are two $\calf$-conjugacy classes of rank $4$ elementary 
abelian subgroups $E \le S(q)$: $\cale_4\Ia$ and 
$\cale_4\Ib\cup\cale_4\II$. If $U_1,U_2\in\cale_4\Ib\cup\cale_4\II$ and 
$\varphi\in\homf(U_1,U_2)$, then $\varphi(\xxx(U_1))=\xxx(U_2)$. 

\end{enuma}
Furthermore, $\Aut_\calf(E)=\Aut(E)$ for all elementary abelian 
subgroups $E \le S(q)$ except when $E\in\cale_4\Ib\cup\cale_4\II$, in 
which case
	\[ \autf(E) = \{\alpha\in\Aut(E) \,|\, 
	\alpha(\xxx(E))=\xxx(E) \}. \]
\end{Lem}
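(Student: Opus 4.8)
The plan is to organise everything around the element $\xxx$ and its behaviour under $\calf$-morphisms, to settle the rank-$4$ statement first and then descend, isolating the genuinely new point — that the rank-$3$ subgroups fall into \emph{two} classes — as the crux. The basic computation is that $\xxx$ is equivariant for $\Spin_7(q)$-fusion: for $h\in\Spin_7(q)$ and $U\in\cale_4$, choosing $g$ with $\9gU\in\cale_4\Ia$ and putting $g'=gh^{-1}$ gives $\9{g'}(\9hU)\in\cale_4\Ia$, so that $\xxx(\9hU)=(g')^{-1}\psi^q(g')=\9h\xxx(U)$, where $\psi^q(h)=h$ because $h\in C_{\Spin_7(\4\F_q)}(\psi^q)=\Spin_7(q)$. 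The rank-$4$ statement (c), unaffected by the present correction and whose proof in \cite{LO} we may reuse, then provides the rest: $\cale_4\Ia$ (the locus $\xxx=1$) is one class, $\cale_4\Ib\cup\cale_4\II$ is a single class, every $\calf$-isomorphism $\varphi$ of rank-$4$ subgroups satisfies $\varphi(\xxx(U_1))=\xxx(U_2)$, and $\Aut_\calf(E_*)=\Aut(E_*)$ for $E_*\in\cale_4\Ia$. Granting this, the rank-$4$ automizers follow at once: the equivariance gives $\autf(E)\le\{\alpha\mid\alpha(\xxx(E))=\xxx(E)\}$; for $E\in\cale_4\Ib$ this already equals $\Aut_{\Spin_7(q)}(E)=\{\alpha\mid\alpha(z)=z\}$ of Proposition \ref{p:elem.abel.}(a) since $\xxx(E)=z$; and the case $E\in\cale_4\II$ is obtained by transporting this equality along a fusion isomorphism $\cale_4\Ib\to\cale_4\II$, which carries $z=\xxx$ to $\xxx$.

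Next I would establish part (a) together with a reduction used throughout: every elementary abelian subgroup is $\calf$-conjugate to one containing $z$. For rank $1$, an involution $t$ lies in some $U\in\cale_4$, and a short orbit computation with the rank-$4$ automizers shows $t\sim_\calf z$ — directly via $\Aut_\calf(E_*)\cong\GL_4(2)$ when $U\in\cale_4\Ia$, and otherwise by composing $\{\alpha\mid\alpha(\xxx(U))=\xxx(U)\}$ (which has the two orbits $\{\xxx(U)\}$ and its complement on involutions) with the fusion $\cale_4\Ib\to\cale_4\II$ that moves $z=\xxx$ off its fixed point. Since $\gen z$ is central, hence fully centralised and receptive, a $\calf$-isomorphism $\psi\colon\gen t\to\gen z$ extends to $\bar\psi$ over $C_S(t)$, and as $E\le C_S(t)$ for any $E\ni t$ this yields a $\calf$-conjugate $\bar\psi(E)\ni z$, giving the reduction. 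For rank $2$ the $z$-containing conjugate lies in $\cale_2\I$ (as $\cale_2\II=\emptyset$), a single $\Spin_7(q)$-class, so there is one class; rank $0$ is trivial.

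The heart of the correction is part (b). ``At most two classes'' is routine: by the reduction every rank-$3$ subgroup is conjugate to one containing $z$, i.e.\ to a member of $\cale_3\I$ or of $\cale_3\II$, each a single $\Spin_7(q)$-class by Proposition \ref{p:elem.abel.}. For ``at least two'' I would attach to each rank-$3$ subgroup $E$ the bit $b(E)$, equal to $1$ if $\xxx(U)\in E$ for a rank-$4$ hull $U\supseteq E$ and to $0$ otherwise. This is well defined: if $z\notin E$ the hull $U=E\gen z$ is the unique one, while if $z\in E$ then by Lemma \ref{l:I-II} the condition $\xxx(U)\in E$ holds for every hull $U\in\cale_4\Ib\cup\cale_4\II$ exactly when $E$ has type I (and $\xxx=1\in E$ for a hull in $\cale_4\Ia$). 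And $b$ is a $\calf$-invariant: given $\varphi\colon E\to\hat E$ with $\hat E$ fully normalised, hence receptive, $\varphi$ extends over $C_S(E)$ — which contains every hull of $E$ — to a rank-$4$ isomorphism $\bar\varphi\colon U\to\hat U$; part (c) puts $\hat U$ in the same rank-$4$ class as $U$, and $\bar\varphi(\xxx(U))=\xxx(\hat U)$ (trivial when $U\in\cale_4\Ia$, where $\xxx=1$) shows $\xxx(U)\in E$ iff $\xxx(\hat U)\in\hat E$, so $b(E)=b(\hat E)$. As $b=1$ on $\cale_3\I$ and $b=0$ on $\cale_3\II$, these lie in distinct $\calf$-classes, so there are exactly two. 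The remaining automizer claims then follow by transport: a type I rank-$3$ subgroup is $\calf$-conjugate to a hyperplane $H\le E_*$, on which $N_{\Aut_\calf(E_*)}(H)$ induces all of $\GL(H)$; a type II one is conjugate to a hyperplane $H\le U\in\cale_4\II$ with $\xxx(U)\notin H$, where $U=H\oplus\gen{\xxx(U)}$ makes $N_{\autf(U)}(H)$ induce all of $\GL(H)$; ranks $\le2$ are the same inside $E_*$. Hence $\Aut_\calf(E)=\Aut(E)$ off $\cale_4\Ib\cup\cale_4\II$.

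The step I expect to be the main obstacle is precisely the lower bound in (b) — that $\cale_3\I$ and $\cale_3\II$ are not fused — since this is exactly where \cite{LO} erred. The difficulty is that an $\calf$-isomorphism need not fix $z$, so one cannot read off the type ``at $z$''; the repair is to replace $z$ by the genuinely $\calf$-equivariant element $\xxx$, encode the type as the hull-bit $b$, and prove its invariance by promoting the rank-$3$ isomorphism to the rank-$4$ hull through receptivity and then invoking the equivariance of part (c). The two facts that need care are thus the extendability of a rank-$3$ isomorphism to a rank-$4$ hull (receptivity in the saturated system $\calf$, together with the observation that any hull centralises $E$) and the well-definedness of $b$ via Lemma \ref{l:I-II}; once these are in hand, the orbit computations for ranks $1$ and $2$ and the automizer transports are bookkeeping.
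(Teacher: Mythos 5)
Your proposal is correct and follows essentially the same route as the paper: the crux --- that $\cale_3\I$ and $\cale_3\II$ are not fused in $\calf_\Sol(q)$ --- is handled in both by lifting a rank-$3$ isomorphism to the rank-$4$ hulls via the extension axiom, invoking the $\xxx$-equivariance of part (c), and reading off the type from Lemma \ref{l:I-II}, and the automizer claims are likewise obtained by restricting from rank-$4$ overgroups. The only differences are organizational: you package the key step as a well-defined $\calf$-invariant $b$ where the paper argues by contradiction against a fully centralized representative, and you re-derive from scratch (the equivariance of $\xxx$, the rank $\le 2$ classes, the rank-$4$ automizers) what the paper simply imports from the unaffected parts of \cite[Lemma 3.1]{LO}.
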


\begin{proof} \boldd{Case 1: $\rk(E)=2$ or $4$: } Except for the last 
statement in (c), this holds by \cite[Lemma 3.1]{LO}. (This part of the 
proof of the lemma is correct.) If $\varphi\in\homf(U_1,U_2)$ where 
$U_1,U_2\in\cale_4\Ib\cup\cale_4\II$, then $\varphi$ is an isomorphism, and 
conjugation by $\varphi$ sends $\autf(U_1)$ to $\autf(U_2)$. Hence 
$\varphi(\xxx(U_1))=\xxx(U_2)$, since $\autf(U_i)$ is the group of all 
automorphisms that send $\xxx(U_i)$ to itself.

\smallskip

\noindent\boldd{Case 2: $\rk(E)=3$: } Assume there is only one 
$\calf_\Sol(q)$-class of subgroups of rank $3$. For $E\cong E_8$ with 
$z\notin E$, $C_{S(q)}(E)=C_{S(q)}(E\gen{z})=E\gen{z}$. Since the 
centralizer in $S(q)$ of each member of $\cale_3$ contains a copy of 
$E_{16}$ by Proposition \ref{p:elem.abel.}(c), there are rank 3 subgroups 
that are fully centralized in $\calf$ and contain $z$. 

So fix $E_*\in\cale_3$ (thus $z\in E_*$) that is fully centralized. Let $E$ 
be a rank 3 subgroup of $U\in\cale_4\Ib\cup\cale_4\II$ that contains $z$ 
and has the opposite type (I or II). By the extension axiom for $\calf$ 
(\cite[Proposition I.2.5]{AKO}), each $\varphi\in\isof(E,E_*)$ extends to 
some $\4\varphi\in\isof(U,U_*)$, where $U_*\in 
U^\calf=\cale_4\Ib\cup\cale_4\II$. Set $x=\xxx(U)$ and $x_*=\xxx(U_*)$. 
Then $f(x)=x_*$ by Case 1, and either $x\in E$ and $x_*\in E_*$, or 
$x\notin E$ and $x_*\notin E_*$. Hence $E$ and $E_*$ have the same type by 
Lemma \ref{l:I-II}, contradicting our assumption, and finishing the proof 
of (b). 

It remains to prove that $\autf(E)=\Aut(E)$ for all $E\in\cale_3$. If $E<U$ 
for $U\in\cale_4\Ia$, then $E\in\cale_3\I$ by definition, and 
$\autf(U)=\Aut(U)$ by Case 1. If $E<U$ for $U\in\cale_4\II$ and $\xxx(U)\notin E$ 
(recall $\xxx(U)\ne z$ in this case), then $\autf(E)=\Aut(E)$ since 
$\autf(U)=\{\alpha\in\Aut(U)\,|\,\alpha(\xxx(U))=\xxx(U)\}$, and 
$E\in\cale_3\II$ by Lemma \ref{l:I-II}. Thus $\autf(E)=\Aut(E)$ for at 
least some members of $\cale_3\I$ and some members of $\cale_3\II$, and 
hence for all elementary abelian subgroups of $S(q)$ of rank $3$. 
\end{proof}

Thus for $U\in\cale_4\Ia$, all rank 3 subgroups of $U$ are 
$\calf_\Sol(q)$-conjugate to each other (whether or not they contain $z$). 
For $U\in\cale_4\Ib\cup\cale_4\II$, all rank 3 subgroups of $U$ that 
contain $\xxx(U)$ are conjugate to each other, and all those that do not 
contain $\xxx(U)$ are conjugate to each other. 

Recall that the two $\Spin_7(q)$-conjugacy classes $\cale_4\Ia$ and 
$\cale_4\Ib$ are exchanged by a diagonal automorphisms of $\Spin_7(q)$. It 
is the precise construction of $\calf=\calf_\Sol(q)$ that determines which 
of those two classes fuses with $\cale_4\II$ in $\calf_\Sol(q)$.

\smallskip

\noindent\textbf{Affect on the rest of \cite{LO}: } 
Lemma 3.1 in \cite{LO} is referred to only twice later in the paper. In the 
proof of Lemma 3.2 (at the bottom of p. 944), we only need to know that 
$\Aut_{\calf_\Sol(q)}(E)=\Aut(E)$ for $E\in\cale_3$ (in either class), and 
this is shown in Lemma \ref{l:3.1}. 

In the proof of Lemma 4.1 (at the top of p. 952), we set $E=\Omega_1(Z(P))$ 
for a certain $\calf$-centric subgroup $P\le S$ (where 
$\calf=\calf_\Sol(q)$), and claim that $E$ is $\calf$-conjugate to a member 
of $\cale_3\I$. If not, then $E$ is $\calf$-conjugate to a 
member of $\cale_3\II$, and either $z\notin E$ (hence 
$C_S(E)=E\gen{z}\cong E_{16}$), or $E\in\cale_3\II$ (hence 
$C_S(E)\cong E_{16}$ by Proposition \ref{p:elem.abel.}(c)). Since $P\le 
C_S(E)$ and $P$ is $\calf$-centric, this situation is impossible.

\bigskip

\end{document}

================================================

\bibitem[As1]{Asch2} M. Aschbacher, A characterization of Chevalley groups 
over fields of odd order, Annals of Math. 106 (1977), 353--398

\bibitem[As2]{Aschbacher} M. Aschbacher, Finite group theory, Cambridge 
Univ. Press (1986)

\bibitem[Be]{Benson} D. Benson, Cohomology of sporadic groups, finite loop 
spaces, and the Dickson invariants, Geometry and cohomology in group 
theory, London Math. Soc. Lecture notes ser. 252, Cambridge Univ. Press 
(1998), 10--23

\bibitem[BK]{BK} P. Bousfield \& D. Kan, Homotopy limits, completions, and 
localizations, Lecture notes in math. 304, Springer-Verlag (1972)

\bibitem[BLO1]{BLO1} C. Broto, R. Levi, \& B. Oliver, Homotopy equivalences 
of $p$-completed classifying spaces of finite groups, preprint

\bibitem[BLO2]{BLO2} C. Broto, R. Levi, \& B. Oliver, The homotopy theory 
of fusion systems, preprint

\bibitem[BM]{BM} C. Broto \& J. M{\o}ller, Homotopy finite Chevalley 
versions of $p$-compact groups (in preparation)

\bibitem[Di]{Dieudonne} J. Dieudonn\'e, La g\'eom\'etrie des groupes 
classiques, Springer-Verlag (1963)

\bibitem[DW1]{DW:DI4} W. Dwyer \& C. Wilkerson, A new finite loop space at 
the prime two, J. Amer. Math. Soc. 6 (1993), 37--64

\bibitem[DW2]{DW:p-compact} W. Dwyer \& C. Wilkerson, Homotopy fixed-point 
methods for Lie groups and finite loop spaces, Annals of Math. 139 (1994), 
395--442

\bibitem[DW3]{DW-cent} W. Dwyer \& C. Wilkerson, The center of a 
$p$-compact group, The \v{C}ech centennial, Contemp. Math. 181 (1995), 
119--157

\bibitem[Fr]{Friedlander} E. Friedlander, Etale homotopy of simplicial 
schemes, Princeton Univ. Press (1982)

\bibitem[FM]{FM} E. Friedlander \& G. Mislin, Cohomology of classifying 
spaces of complex Lie groups and related discrete groups, Comment. Math. 
Helv. 59 (1984), 347--361

\bibitem[Gd]{Goldschmidt} D. Goldschmidt, Strongly closed 2-subgroups of 
finite groups, Annals of Math. 102 (1975), 475--489

\bibitem[Go]{Gorenstein} D. Gorenstein, Finite groups, Harper \& Row (1968)

\bibitem[JMO]{JMO} S. Jackowski, J. McClure, \& B. Oliver, Homotopy 
classification  of self-maps of $BG$ via $G$-actions, Annals of Math. 135 
(1992), 184--270

\bibitem[La]{Lannes} J. Lannes, Sur les espaces fonctionnels dont la source 
est le classifiant d'un $p$-groupe ab\'elien \'el\'ementaire, Publ. 
I.H.E.S. 75 (1992)

\bibitem[Nb]{Notbohm} D. Notbohm, On the 2-compact group DI(4) (preprint)

\bibitem[Pu]{Puig} L. Puig, Unpublished notes  

\bibitem[Sm]{Smith} L. Smith, Homological algebra and the Eilenberg-Moore 
spectral sequence, Trans. Amer. Math. Soc. 129 (1967), 58--93

\bibitem[Sm2]{Smith2} L. Smith, Polynomial invariants of finite groups, A. 
K. Peters (1995)

\bibitem[So]{Solomon} R. Solomon, Finite groups with Sylow 2-subgroups of 
type $.3$, J. Algebra 28 (1974), 182--198

\bibitem[Sz]{Suzuki} M. Suzuki, Group theory I, Springer-Verlag (1982)

\bibitem[Ta]{Taylor} D. Taylor, The geometry of the classical groups, 
Heldermann Verlag (1992)

\bibitem[Wb]{Weibel} C. Weibel, An introduction to homological algebra, 
Cambridge Univ. Press (1994)

\bibitem[Wi]{Wilker} C. Wilkerson, A primer on the Dickson invariants, 
Proc. Northwestern homotopy theory conference 1982, Contemp. Math. 19 
(1983), 421--434

======================================================

\bibitem[Alp]{Alperin} J. Alperin, Sylow 2-subgroups of 2-rank three, 
Finite groups '72, Proc. Gainesville Conf. 1972, North-Holland Math. 
Studies 7, North-Holland (1973), 3--5

\bibitem[AOV1]{AOV1} K. Andersen, B. Oliver, \& J. Ventura, Reduced, tame, 
and exotic fusion systems, Proc. London Math. Soc. 105 (2012), 87--152

\bibitem[AOV3]{AOV3} K. Andersen, B. Oliver, \& J. Ventura, Reduced fusion 
systems over $2$-groups of small order, J. Algebra 489 (2017), 310--372

\bibitem[A1]{Afit} M. Aschbacher, The generalized Fitting subsystem of 
a fusion system, Memoirs Amer. Math. Soc. 986 (2011)

\bibitem[AKO]{AKO} M. Aschbacher, R. Kessar, \& B. Oliver, Fusion systems 
in algebra and topology, Cambridge Univ. Press (2011)

\bibitem[Be1]{Bell1} G. Bell, On the cohomology of the finite special linear 
groups I, J. Algebra 54 (1978), 216--238 

\bibitem[Atlas]{atlas} J. Conway, R. Curtis, S. Norton, R. Parker, R. 
Wilson, ATLAS of finite groups, Clarendon Press (1985)


\bibitem[BCGLO2]{BCGLO2} C. Broto, N. Castellana, J. Grodal, R. Levi, \&
B. Oliver, Extensions of $p$-local finite groups, Trans. Amer. Math. Soc.
359 (2007), 3791--3858

\bibitem[BLO4]{BLO4} C. Broto, R. Levi, \& B. Oliver, A geometric 
construction of saturated fusion systems, An alpine anthology of homotopy 
theory (proceedings Arolla 2004), Contemp. Math. 399 (2006), 11-39

\bibitem[BMO1]{BMO1} C. Broto, J. M\o{}ller, \& B. Oliver, Equivalences 
between fusion systems of finite groups of Lie type, Journal Amer. Math. 
Soc. 25 (2012), 1--20

\bibitem[CE]{CE} H. Cartan \& S. Eilenberg, Homological algebra, 
Princeton Univ. Press (1956)

\bibitem[Cu]{Curtis2} R. Curtis, On subgroups of $\cdot0$ II. Local 
structure, J. Algebra 63 (1980), 413--434

\bibitem[Gd]{Goldschmidt} D. Goldschmidt, 2-Fusion in finite groups, Annals 
of math. 99 (1974), 70--117

\bibitem[Go]{Gorenstein} D. Gorenstein, Finite groups, Harper \& Row (1968)

\bibitem[GLS3]{GLS3} D. Gorenstein, R. Lyons, \& R. Solomon, The 
classification of the finite simple groups, nr. 3, Amer. Math. Soc. surveys 
and monogr. 40 \#3 (1997)

\bibitem[Gr]{Griess} R. Griess, Twelve sporadic groups, Springer-Verlag 
(1998)

\bibitem[Js]{Jansen} C. Jansen, The minimal degrees of faithful 
representations of the sporadic simple groups and their covering groups, 
LMS J. Comput. Math. 8 (2005), 122-144

\bibitem[JLPW]{modatlas} C. Jansen, K. Lux, R. Parker \& R. Wilson, An 
Atlas of Brauer characters, Oxford Univ. Press (1995)

\bibitem[O2]{O-todd} B. Oliver, Fusion systems realizing certain Todd 
modules, J. Group Theory (to appear)

\bibitem[O3]{O-nr} B. Oliver, Nonrealizability of certain representations 
in fusion systems, 

\bibitem[OV2]{OV2} B. Oliver \& J. Ventura, Saturated fusion systems over 
$2$-groups, Trans. Amer. Math. Soc. 361 (2009), 6661--6728

\bibitem[O'N]{ONan} M. O'Nan, Some evidence for the existence of a new 
simple group, Proc. London Math. Soc. 32 (1976), 421--479

\bibitem[Pu]{Puig} L. Puig, Frobenius categories, J. Algebra 303 
(2006), 309--357

\bibitem[Re1]{Reifart-M24} A. Reifart, Some simple groups related to 
$M_{24}$, J. Algebra 45 (1977), 199--209

\bibitem[Re2]{Reifart-bis} A. Reifart, A 2-local characterization of the 
simple groups $M(24)'$, $.1$, and $J_4$, J. Algebra 50 (1978), 213--227

